\documentclass[11pt]{article}

\usepackage[margin=1.1in]{geometry}
\usepackage{amsmath,amssymb,amsthm,mathtools}
\usepackage{enumitem}
\usepackage{xcolor}
\usepackage[colorlinks=true,linkcolor=blue!50!black,citecolor=blue!50!black,urlcolor=blue!50!black]{hyperref}
\usepackage[noabbrev]{cleveref}

\setlist{itemsep=0.2em,topsep=0.35em}

\theoremstyle{plain}
\newtheorem{theorem}{Theorem}[section]
\newtheorem{proposition}{Proposition}[section]
\newtheorem{lemma}{Lemma}[section]

\theoremstyle{definition}
\newtheorem{definition}{Definition}[section]
\theoremstyle{remark}

\crefname{theorem}{Theorem}{Theorems}
\crefname{proposition}{Proposition}{Propositions}
\crefname{lemma}{Lemma}{Lemmas}
\crefname{corollary}{Corollary}{Corollaries}
\crefname{definition}{Definition}{Definitions}
\crefname{remark}{Remark}{Remarks}
\crefname{section}{Section}{Sections}

\newcommand{\N}{\mathbb N}
\newcommand{\Z}{\mathbb Z}
\newcommand{\E}{\mathbb E}
\newcommand{\Prob}{\mathbb P}
\newcommand{\floor}[1]{\left\lfloor #1\right\rfloor}
\newcommand{\avg}{\operatorname{av}}

\title{Long Lattice Paths with No Three Collinear Vertices}
\author{Samuel Korsky}
\date{\today}

\begin{document}
\maketitle

\begin{abstract}
\noindent
For \(d\ge 1\), let \(L(d)\in\N\cup\{\infty\}\) be the supremum of the lengths of paths in \(\Z^d\) whose steps are standard basis vectors and whose vertex sets contain no collinear triple. We prove that
\[
        \log_2\log_2 L(d)\ge \frac{2}{5}d-O(1)
\]
for all sufficiently large \(d\).
\end{abstract}

\section{Introduction}

A \emph{standard-basis path} in \(\Z^d\) is a sequence
\[
        P_0,P_1,\ldots,P_n
\]
such that \(P_t-P_{t-1}\in\{e_1,\ldots,e_d\}\) for \(1\le t\le n\). Let \(L(d)\in\N\cup\{\infty\}\) be the supremum of the possible lengths of such paths whose vertex sets contain no collinear triple; logarithms of \(+\infty\) are interpreted as \(+\infty\).

The condition that no three visited vertices are collinear is a path-constrained version of the classical no-three-in-line problem, in which one selects a large subset of an integer grid with no three selected points collinear \cite{GuyKelly1968,BrassMoserPach2005}. The path constraint is substantial: the vertices must occur in an order for which every increment is one of the prescribed positive coordinate directions.

The planar north--east case has a separate history. Brown asked whether, for every fixed \(k\), each sufficiently long path with steps \((1,0)\) and \((0,1)\) must contain \(k\) collinear vertices \cite{Brown1971}; Montgomery proved that it must \cite{Montgomery1972}. Gerver and Ramsey gave an explicit upper bound on the required length, while Gerver constructed long paths avoiding \(k\) collinear vertices \cite{Gerver1979,GerverRamsey1979}. Recent work has determined the small values of \(k\) computationally and improved the asymptotic bounds \cite{BarnoffBright2026,Korsky2026}. These planar results keep the dimension equal to two and let the forbidden number of collinear vertices grow.

The finite-step walk problem also has a higher-dimensional side. Gerver and Ramsey constructed an infinite walk in \(\Z^3\), using finitely many allowed steps, whose intersection with every line is uniformly bounded; Lidbetter later improved the numerical bound for that construction \cite{GerverRamsey1979,Lidbetter2024}. Here we study a complementary regime: the forbidden configuration is always a collinear triple, the allowed steps are the standard basis vectors, and the dimension grows.

There is an exact formulation in terms of words. Encode the step \(e_i\) by the letter \(i\), so a path is represented by a word \(x_1\cdots x_n\) over \([d]=\{1,\ldots,d\}\). For a word \(W\), let \(c(W)\) be its letter-count vector, also called its Parikh vector. If \(i<j<k\), then \(P_i,P_j,P_k\) are collinear exactly when
\[
 \frac{c(x_{i+1}\cdots x_j)}{j-i}
 =
 \frac{c(x_{j+1}\cdots x_k)}{k-j}.
\]
Thus we seek long words with no two adjacent nonempty factors having the same normalized count vector. Classical abelian squares require adjacent factors of equal length with the same letter-count vector; their avoidance has been extensively studied \cite{Dekking1979,Keranen1992,FiciPuzynina2023}. Our obstruction permits unequal lengths and compares normalized counts, so it is closer to weak abelian periodicity, where successive blocks have the same letter-frequency vector \cite{AvgustinovichPuzynina2016}. Below we call such an adjacent pair a \emph{bad pair} and call a word with no bad pair \emph{3-free}.

Our main result is the following.

\begin{theorem}\label{thm:main}
There is an absolute constant \(C_0>0\) such that, for all sufficiently large \(d\),
\begin{equation}\label{eq:main-loglog}
        \log_2\log_2 L(d)\ge \frac{2}{5}d-C_0.
\end{equation}
\end{theorem}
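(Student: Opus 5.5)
We will build long 3-free words by an iterated substitution in which each level adds a constant number of fresh letters and squares the length; after about \(\tfrac{2}{5}d\) levels the length is roughly \(2^{2^{2d/5}}\). Concretely, we aim for a recursion of the form
\[
L(d+5)\ \ge\ c\,L(d)^{4}
\qquad\text{or}\qquad
L(d+5/2)\ \ge\ c\,L(d)^2
\]
on average. Iterating, \(\log_2 L\) doubles for every \(5/2\) new letters, which gives \(\log_2\log_2 L(d)\ge \tfrac{2}{5}d-C_0\). So the plan is to find a construction that, from a 3-free word \(W\) over an alphabet \(A\), produces a 3-free word of length about \(|W|^2\) (or \(|W|^4\)) over \(A\) together with a bounded number of new letters.

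\textbf{Construction.} Take \(W=x_1\cdots x_n\) 3-free over \(A\). Replace each letter \(x_t\) by a block \(B_t\) built from \(W\) itself, or a relabeled copy of \(W\), interleaved with separator letters from a small new alphabet. Using two disjoint copies of the alphabet avoids squaring the alphabet size, since we only add a few letters, not \(|A|\). The separators are a constant number of fresh letters placed between blocks in a 3-free pattern (for instance a short 3-free word over the fresh letters, used cyclically with varying gaps). Their job is to encode ``which position of the outer word we are at.''

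\textbf{Verification.} Suppose \(UV\) is a bad pair in the new word. We split into two cases.
\begin{enumerate}[label=(\roman*)]
\item If \(U\) and \(V\) both lie inside one block, the pair is a bad pair in a copy of \(W\), which is impossible.
\item If they span several blocks, project onto the fresh letters and onto the outer-level structure. Equality of normalized counts is preserved under projection to any letter subset, after renormalizing by the number of projected letters in each factor. Hence the projection to separator letters gives equal separator frequencies, which forces \(|U|\) and \(|V|\) to be comparable and aligned with block boundaries up to boundary error. Then project onto the outer letter classes to obtain an approximate bad pair in \(W\).
\end{enumerate}

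\textbf{Main obstacle.} The hard step is converting the \emph{approximate} equality in case (ii) into an exact contradiction. The partial blocks at the ends of \(U\) and \(V\) perturb the counts. To handle this, I would design the blocks so that every block has the same count vector on the inner letters but distinct, rigid counts on separators, chosen via a linear-algebra or sum-free style gadget. The boundary contributions must then be either forced to vanish or to produce a separator-count mismatch, since separator counts are integers with a controlled denominator.

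I expect roughly five new letters to be necessary per squaring-of-squaring step to make this rigidity argument work, hence the constant \(2/5\). The base case and the condition ``sufficiently large \(d\)'' are absorbed into \(C_0\).
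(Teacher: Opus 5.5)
There is a genuine gap. No construction is actually specified, and the one concrete mechanism you suggest provably cannot give doubly exponential growth. Suppose copies of $W$ are interleaved with letters from a fixed set of $k$ fresh separators, $W s_1 W s_2\cdots W s_m$. Merging all separators into one letter $\#$ maps any bad pair to a bad pair of $(W\#)^\infty$. By the periodic-reduction lemma, both lengths are then multiples $aM,bM$ of $M=|W|+1$. Such intervals carry exactly $a\,c(W)$ and $b\,c(W)$ on the old letters, and contain $a$ and $b$ consecutive separators. Hence the new word is 3-free if and only if $s_1\cdots s_m$ is 3-free. From $d+k$ letters the scheme therefore gives at most $(L(d)+1)L(k)$. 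That is a multiplicative gain, not a squaring, so it yields only single-exponential growth. Using the separator word cyclically creates a bad pair as soon as it repeats twice.

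A relabeled copy of the alphabet costs $|A|$ new letters, not a constant. Even a squaring at that price gives only $\log L(d)=\Omega(d)$.

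Case (ii) also contains two unjustified steps. First, equal separator frequencies do not force $|U|$ and $|V|$ to be comparable: the lengths $aM,bM$ can have any ratio, and all $(a,b)$ must be handled. Second, an ``approximate bad pair in $W$'' gives no contradiction, because 3-freeness is an exact condition and says nothing about near-collisions. Finally, the count of five letters is read off from the target $2/5$ rather than derived.

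The missing ingredient is a way to give each period about $n$ distinguishable labels with only $O(1)$ new letters, together with an argument that rules out every exact collision. The paper does this as follows.

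\begin{enumerate}
\item It keeps a single separator, so the reduction to pairs of lengths $aM,bM$ is exact and no boundary error ever arises.
\item It puts the labels into the composition of a dense letter $\sigma$. The occurrences of $\sigma$ are grouped into blocks of $g\approx s/K$ consecutive occurrences. In each block, $\sigma$ is split into $p+1$ letters whose first $p$ counts are $u+U_{B,i}$, with $U_{B,i}$ independent and uniform on $\{-h,\dots,h\}$ and $h\ge c_ps$. The block total is fixed, so merging back recovers $(X\#)^\infty$.
\item A surviving bad pair forces $p$ independent equations $b\sum_{B\subset I}U_{B,i}-a\sum_{B\subset J}U_{B,i}=z_i$. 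By the point-probability bounds, each has conditional probability $\lesssim (s^2\sqrt{ab})^{-1}+\gcd(a,b)\,(s\sqrt{ab(a+b)})^{-1}$.
\item The $p$-th power of this bound is summable over all $(a,b)$ once $p\ge 3$. A first-moment count over $N$ starting positions then permits $N\approx\varepsilon s^p$.
\end{enumerate}

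This turns length $n$ into about $\rho^pn^p$ at a cost of $p+1$ letters, while keeping a dense letter $\sigma_1$. The geometric density loss only affects the additive constant. Maximizing $\log_2p/(p+1)$ over integers $p\ge3$ gives $p=4$, which is where $2/5$ comes from.

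Your idea of blocks with fixed totals but varying internal composition has the right shape. What makes it work in the paper is the exact periodic reduction plus the randomized point-probability and summability argument. A deterministic ``rigid'' gadget that avoids all these weighted linear equations for every $(a,b)$ and every offset is neither supplied in your proposal nor obviously available.
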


Equivalently, \(L(d)\) is at least doubly exponential in \(d\), with inner exponent \(\frac25d-O(1)\). A projection argument also gives a finite planar consequence: an \(n\)-step walk with no collinear triple can be generated by only \(\frac52\log_2\log_2 n+O(1)\) allowed steps. The allowed step set may depend on \(n\), so this does not resolve the infinite fixed-step-set problem \cite{ErdosProblems193}.

The proof repeatedly turns one 3-free word into a much longer one. Start with a 3-free word \(X\) containing a letter \(\sigma\) with positive density. Append a new separator letter and repeat the resulting word periodically. The separator gives an exact reduction: a bad pair in the periodic word can occur only when both factor lengths are whole multiples of the period.

Next, group the occurrences of \(\sigma\) into blocks and replace \(\sigma\) by \(p+1\) new letters. In each block, the first \(p\) replacement counts are independently perturbed while their total remains fixed. For a possible bad pair with lengths \(a\) and \(b\) periods, remaining badness imposes \(p\) independent weighted-sum equations. The point-probability bounds in \cref{lem:local-gaussian,lem:weighted-uniform-sums} show that, for \(p\ge3\), the probabilities are summable over all \((a,b)\). The resulting estimate makes the expected number of bad pairs less than one, so some choice of the random replacements creates none.

Quantitatively, a word of length \(n\) with a marked letter of density at least \(\rho\) yields one of length at least \(c_p\rho^pn^p\). The alphabet grows by at most \(p+1\), and a marked letter of density at least \(c_p\rho\) remains. Iteration gives the rate
\[
        \frac{\log_2p}{p+1}.
\]
Among integers \(p\ge3\), this rate is uniquely maximized at \(p=4\), yielding the coefficient \(2/5\).

\Cref{sec:words} develops the word formulation and the periodic reduction. \Cref{sec:probability} proves the point-probability estimates, \cref{sec:amplification} gives the amplification step, and \cref{sec:iteration} carries out the iteration. \Cref{sec:planar} proves the finite planar consequence.

\section{Words and a Periodic Reduction}\label{sec:words}

Let \(\Sigma\) be a finite alphabet. A word \(X=x_1\cdots x_n\) over \(\Sigma\) determines a standard-basis path by
\begin{equation}\label{eq:path-from-steps}
        v_0(X)=0,
        \qquad
        v_t(X)=\sum_{m=1}^t e_{x_m}
        \quad (1\le t\le n).
\end{equation}
For a nonempty interval \(I=[a+1,b]\) of positions, define
\begin{equation}\label{eq:average-increment}
        |I|=b-a,
        \qquad
        \avg_X(I)=\frac{v_b(X)-v_a(X)}{b-a}.
\end{equation}
The coordinates of \(\avg_X(I)\) are the relative letter counts in the factor indexed by \(I\). Two adjacent nonempty intervals \(I=[i+1,j]\) and \(J=[j+1,k]\) form a \emph{bad pair} if
\begin{equation}\label{eq:bad-pair}
        \avg_X(I)=\avg_X(J).
\end{equation}
We call \(X\) \emph{3-free} if its path contains no three collinear vertices.

For \(i<j<k\), the two successive displacement vectors are
\[
        (j-i)\avg_X([i+1,j])
        \quad\text{and}\quad
        (k-j)\avg_X([j+1,k]).
\]
They have nonnegative coordinates and positive coordinate sums, so the three vertices are collinear exactly when the displacements are positive scalar multiples. Dividing by their coordinate sums shows that this is equivalent to equality of the two averages. Thus a word is 3-free if and only if it has no bad pair.

Write \(|W|_\sigma\) for the number of occurrences of a letter \(\sigma\) in a word \(W\).

\begin{definition}\label{def:marked-word}
Let $0<\rho\le1$. A triple $(X,\sigma,\rho)$ is a \emph{marked 3-free word} if $X$ is 3-free and
\begin{equation}\label{eq:marked-density}
        |X|_\sigma\ge \rho|X|.
\end{equation}
\end{definition}

Only this global density is needed by the construction. This allows us to construct a \emph{initial word} of arbitrarily large fixed length:

\begin{proposition}[Seed]\label{prop:seed}
For every prescribed \(Q\ge1\), there is a marked 3-free word \((X,\sigma,1/2)\) with \(|X|\ge Q\).
\end{proposition}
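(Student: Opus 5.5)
The plan is an explicit construction rather than anything probabilistic. The alphabet of the seed may be as large as we like, since \(Q\) is a fixed constant for the later iteration. So I would separate the occurrences of \(\sigma\) by letters that each appear only once. Given \(Q\), let \(m=\lceil Q/2\rceil\), and take an alphabet consisting of \(\sigma\) together with \(m\) further distinct letters \(a_1,\ldots,a_m\). Set
\[
        X=\sigma a_1\sigma a_2\cdots\sigma a_m .
\]
Then \(|X|=2m\ge Q\) and \(|X|_\sigma=m=\tfrac12|X|\), so the density condition \eqref{eq:marked-density} holds with \(\rho=1/2\).

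It remains to show that \(X\) has no bad pair; by the discussion after \eqref{eq:bad-pair}, this is equivalent to \(X\) being 3-free. Suppose \(I=[i+1,j]\) and \(J=[j+1,k]\) are adjacent nonempty intervals with \(\avg_X(I)=\avg_X(J)\).

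First, equal averages force equal supports: a letter has a positive coordinate in \(\avg_X(I)\) exactly when it occurs in the factor indexed by \(I\), and likewise for \(J\). Now suppose some \(a_t\) occurs in the factor indexed by \(I\). Then it must also occur in the factor indexed by \(J\). This is impossible, because \(a_t\) occurs exactly once in \(X\) and \(I,J\) are disjoint. By symmetry neither factor contains any \(a_t\).

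So both factors consist only of copies of \(\sigma\). Since the occurrences of \(\sigma\) in \(X\) are pairwise non-adjacent, each such factor is a single letter \(\sigma\). The concatenation of the two factors, indexed by \(I\cup J\), would then be \(\sigma\sigma\), which does not occur in \(X\). This contradiction shows that \(X\) has no bad pair, so \((X,\sigma,1/2)\) is a marked 3-free word with \(|X|\ge Q\).

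There is no real obstacle here. The only point needing care is the support argument, namely that equality of averages forces the two factors to contain exactly the same set of letters. This follows because every coordinate of an average is nonnegative and is positive precisely for the letters present in that factor.
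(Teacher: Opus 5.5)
Your proposal is correct and is essentially the paper's proof. It uses the same word $\sigma a_1\sigma a_2\cdots\sigma a_m$ with singly occurring letters $a_i$, the same support argument that excludes every $a_i$ from a bad pair, and the same use of non-adjacency of the $\sigma$'s; you spell out that final step (each factor is a single $\sigma$, so $\sigma\sigma$ would have to occur) a little more explicitly than the paper does.
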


\begin{proof}
Choose \(m\) with \(2m\ge Q\), take pairwise distinct letters \(a_1,\ldots,a_m\) different from \(\sigma\), and set
\begin{equation}\label{eq:simple-seed}
        X=\sigma a_1\sigma a_2\cdots\sigma a_m.
\end{equation}
Then \(|X|_\sigma=m=|X|/2\). If a bad pair contained some \(a_i\), equality of normalized counts would force \(a_i\) to occur in both adjacent intervals, which is impossible because \(a_i\) occurs only once. Hence both intervals would have to contain only \(\sigma\), also impossible because no two occurrences of \(\sigma\) are adjacent. Thus \(X\) is 3-free.
\end{proof}

The separator gives an exact periodic reduction.

\begin{lemma}[Periodic reduction]\label{lem:periodic-reduction}
Let \(X\) be a 3-free word of length \(n\), let \(\#\) be a new letter, and let
\[
        W=(X\#)^\infty
\]
have period \(M=n+1\). Two adjacent nonempty intervals of \(W\) form a bad pair if and only if both lengths are multiples of \(M\).
\end{lemma}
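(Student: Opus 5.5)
The \emph{if} direction is a direct count; the \emph{only if} direction will be reduced, by a decomposition along the separators, to a bad pair inside a single copy of \(X\).

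\textbf{The easy direction.} Write \(w=c(X\#)\). Any factor of \(W\) whose length is \(aM\) is a cyclic rotation of \((X\#)^a\). Hence its count vector is \(aw\) and its average is \(w/M\). So if both lengths are multiples of \(M\), both averages equal \(w/M\) and the pair is bad.

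\textbf{The converse: setup and the case with no separator.} Let \(I,J\) be adjacent, with \(\avg_W(I)=\avg_W(J)\). Let \(h_I,h_J\) be the numbers of occurrences of \(\#\) in \(I\) and \(J\). Comparing the \(\#\)-coordinates of the averages gives \(h_I/|I|=h_J/|J|\), so either both are zero or both are positive. If both are zero, then \(I\cup J\) is a contiguous factor of \(W\) containing no \(\#\). It therefore lies inside a single copy of \(X\), and \(I,J\) form a bad pair of \(X\), contradicting 3-freeness.

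\textbf{The converse when both contain \(\#\).} Put \(\mu=h_I/h_J>0\). Equality of averages then says exactly that \(c(I)=\mu\,c(J)\).

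Decompose \(I\) along its separators as \(I=A_I\,\#\,(X\#)^{h_I-1}B_I\). Here \(A_I\) is a (possibly empty) suffix of a copy of \(X\), and \(B_I\) is a (possibly empty) prefix of a copy of \(X\). Decompose \(J\) in the same way into \(A_J\) and \(B_J\). This gives
\[
c(I)-h_Iw=c(A_I)+c(B_I)-c(X)=:\delta_I,
\]
and similarly \(c(J)-h_Jw=\delta_J\). Since \(h_I=\mu h_J\), the relation \(c(I)=\mu c(J)\) becomes \(\delta_I=\mu\,\delta_J\).

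\textbf{Using adjacency.} \(I\) ends with \(B_I\), and \(J\) begins with \(A_J\), and these two pieces lie in the same copy of \(X\). So \(B_IA_J=X\) in that copy, with the boundary conventions checked when the cut point is adjacent to a \(\#\). Hence \(c(B_I)+c(A_J)=c(X)\), which rewrites the differences as
\[
\delta_I=c(A_I)-c(A_J),\qquad \delta_J=c(B_J)-c(B_I).
\]
Read inside that one copy of \(X\), with \(\beta\) the cut point between \(B_I\) and \(A_J\), each difference is \(\pm\) the count vector of a factor of \(X\):
\begin{itemize}
\item \(\delta_I\) is \(\pm\) the count vector of the factor between the start of \(A_I\) (transported to this copy) and \(\beta\);
\item \(\delta_J\) is \(\pm\) the count vector of the factor between \(\beta\) and the end of \(B_J\).
\end{itemize}
These two factors of \(X\) are adjacent at \(\beta\).

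\textbf{Concluding by signs.} We have \(\delta_I=\mu\delta_J\) with \(\mu>0\).
\begin{itemize}
\item If the signs are \(+\) and \(+\), the two factors have proportional count vectors. If both are nonempty, this is a bad pair in \(X\), which is impossible. If one is empty, its count vector is zero, which forces the other's count vector to be zero, so it is empty too.
\item If the signs differ, a nonnegative vector equals a positive multiple of a nonpositive one. Then both are zero, and again both factors are empty.
\end{itemize}
So in every case \(\delta_I=\delta_J=0\), which gives \(|A_I|+|B_I|=n\). Therefore \(|I|=(h_I-1)M+1+n=h_IM\), and likewise \(|J|=h_JM\).

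\textbf{Main obstacle.} I expect the main difficulty to be the bookkeeping in the last three steps rather than any conceptual gap. One must transport \(A_I\) and \(B_J\), which may lie in different copies of \(X\), to the common copy containing \(\beta\). One must also treat carefully the cases where an endpoint of \(I\) or \(J\) falls on a \(\#\), or where \(A\) or \(B\) is empty. I would fix the convention that positions are recorded modulo \(M\) within \(X\#\), and check that the signed-factor description of \(\delta_I\) and \(\delta_J\) holds uniformly.
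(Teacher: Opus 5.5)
Your proof is correct and is essentially the paper's argument in different notation. Your $h_I,h_J$ are the paper's $\alpha,\beta$. Writing $\delta_I=F_{|B_I|}-F_{n-|A_I|}$ and $\delta_J=F_{|B_J|}-F_{|B_I|}$ with $F_j=v_j(X)$ removes the ``transport'' bookkeeping you were worried about. In that form, $\delta_I=\mu\delta_J$ is exactly the paper's relation $\beta(F_s-F_r)=\alpha(F_t-F_s)$; the paper then concludes from the collinearity of $F_r,F_s,F_t$ directly rather than through your sign cases.

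The one slip is that your first bullet should read ``if the signs agree.'' When both signs are $-$ (the start of $A_I$ lies after the cut point and the end of $B_J$ lies before it), you again get two nonempty adjacent factors of $X$ with proportional count vectors. That case is ruled out by the same argument.
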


\begin{proof}
Put \(F_r=v_r(X)\) for \(0\le r\le n\), viewed in the enlarged coordinate space with \(\#\)-coordinate \(0\), and set
\[
        V=F_n+e_\#.
\]
After \(qM+r\) steps, where \(q\ge0\) and \(0\le r<M\), the path generated by \(W\) is at \(qV+F_r\).

Write the three endpoints of two adjacent intervals as
\[
        u=q_0M+r,\qquad
        v=q_1M+s,\qquad
        w=q_2M+t,
\]
where \(0\le r,s,t<M\), and put
\[
        \alpha=q_1-q_0,\qquad \beta=q_2-q_1.
\]
Since \(u<v<w\), we have \(\alpha,\beta\ge0\). The two displacement vectors are
\[
        \alpha V+F_s-F_r,
        \qquad
        \beta V+F_t-F_s.
\]
If their averages are equal, comparison of the \(\#\)-coordinate gives
\begin{equation}\label{eq:separator-ratio}
        \frac{\alpha}{v-u}=\frac{\beta}{w-v}.
\end{equation}
If \(\alpha=0\), then \(\beta=0\). In that case \(r<s<t\), so the two intervals form a bad pair inside \(X\), a contradiction. Thus \(\alpha,\beta>0\).

Using \eqref{eq:separator-ratio} in the remaining coordinates cancels the full-period contribution and gives
\begin{equation}\label{eq:prefix-collinearity}
        \beta(F_s-F_r)=\alpha(F_t-F_s),
\end{equation}
or equivalently
\[
        (\alpha+\beta)F_s=\beta F_r+\alpha F_t.
\]
The coordinate sum of \(F_j\) is \(j\), so \(F_0,\ldots,F_n\) are distinct. If \(r,s,t\) are pairwise distinct, the last display makes three distinct vertices of \(X\) collinear. If exactly two residues are equal, \eqref{eq:prefix-collinearity} forces the third to be equal as well. Hence \(r=s=t\), and the two interval lengths are \(\alpha M\) and \(\beta M\).

Conversely, every interval of length \(mM\), with \(m\ge1\), has displacement \(mV\), independently of its starting position, and therefore has average \(V/M\).
\end{proof}

\section{Point-Probability Bounds for Random Sums}\label{sec:probability}

We first bound the largest point probability of a sum of discrete uniform variables.

\begin{lemma}[Local Gaussian bound]\label{lem:local-gaussian}
Let $h,q\ge1$, let $U_1,\ldots,U_q$ be independent and uniform on $\{-h,\ldots,h\}$, and put $S_q=\sum_{j=1}^qU_j$. There are absolute constants $c,C>0$ such that, for every $x\in\Z$,
\begin{equation}\label{eq:local-gaussian}
        \Prob(S_q=x)
        \le \frac{C}{h\sqrt q}\cdot
        \exp\left(-c\cdot\frac{x^2}{h^2q}\right).
\end{equation}
\end{lemma}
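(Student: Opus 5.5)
We prove the bound by combining two ingredients: a uniform sup-norm estimate of order \(1/(h\sqrt q)\), and a Chernoff tail estimate. We then merge them by splitting \(S_q\) into two halves.

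\textbf{Sup-norm bound.} We use the characteristic function. Each \(U_j\) has
\[
\varphi(\theta)=\frac{1}{2h+1}\sum_{k=-h}^{h}e^{ik\theta}=\frac{\sin((2h+1)\theta/2)}{(2h+1)\sin(\theta/2)},
\]
and inversion gives
\[
\Prob(S_q=x)\le\frac{1}{2\pi}\int_{-\pi}^{\pi}|\varphi(\theta)|^q\,d\theta .
\]
We estimate \(|\varphi|\) on two ranges.
\begin{itemize}
\item For \(|\theta|\le 1/h\), Taylor expansion gives \(|\varphi(\theta)|\le \exp(-c_1h^2\theta^2)\). The integral over this range is then at most \(C/(h\sqrt q)\).
\item For \(1/h\le|\theta|\le\pi\), we have \(|\varphi(\theta)|\le \min\{\kappa,\;C'/(h|\theta|)\}\) for some absolute \(\kappa<1\).
\end{itemize}
The second range is the step needing care: for \(q=1\) a crude bound would lose a logarithm. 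We avoid this by writing \(|\varphi|^q\le \kappa^{q-1}|\varphi|\) for \(q\ge 2\). For \(q=1\) the claim is trivial anyway, since \(\Prob(U_1=x)\le 1/(2h+1)\) and \(U_1\) is supported on \(|x|\le h\). For \(q\ge2\) we use instead \(|\varphi|^q\le|\varphi|^2\kappa^{q-2}\). Then \(\int_{|\theta|>1/h}|\varphi|^2\le \int C'^2/(h^2\theta^2)\,d\theta\le C''/h\), and \(\kappa^{q-2}\le C/\sqrt q\). Hence \(\sup_x\Prob(S_q=x)\le C/(h\sqrt q)\) in all cases.

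\textbf{Tail bound.} Since \(|U_j|\le h\), Hoeffding's inequality gives
\[
\Prob(|S_q|\ge t)\le 2\exp\bigl(-t^2/(2qh^2)\bigr).
\]

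\textbf{Combining.} Assume \(q\ge2\). Split \(S_q=A+B\), where \(A\) is the sum of the first \(\lfloor q/2\rfloor\) terms and \(B\) the sum of the rest; \(A\) and \(B\) are independent. If \(S_q=x\), then \(|A|\ge|x|/2\) or \(|B|\ge|x|/2\). Taking the first case,
\[
\Prob(S_q=x,\ |A|\ge |x|/2)=\sum_{|a|\ge|x|/2}\Prob(A=a)\Prob(B=x-a)\le \Prob(|A|\ge|x|/2)\,\sup_y\Prob(B=y).
\]
By the two ingredients this is at most
\[
2e^{-x^2/(8\lfloor q/2\rfloor h^2)}\cdot\frac{C}{h\sqrt{q/2}} .
\]
The case \(|B|\ge|x|/2\) is symmetric. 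Adding the two cases gives \eqref{eq:local-gaussian} with, for example, \(c=1/8\).

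For \(q=1\) the claim follows directly: \(\Prob(U_1=x)\le 1/h\), and it vanishes unless \(|x|\le h\), where the exponential factor is at least \(e^{-c}\).

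The main obstacle is the uniform Fourier bound with the correct \(1/h\) scaling, specifically handling \(|\varphi|\) away from zero uniformly in \(h\). The remaining steps are standard.
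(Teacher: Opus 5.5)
Your proof is correct. The Fourier half is the same as the paper's: a Gaussian bound near the origin, $|\varphi|^q\le\kappa^{q-2}|\varphi|^2$ with $|\varphi|\le C'/(h|\theta|)$ away from it, and $\kappa^{q-2}\le C/\sqrt q$. Your first remark, $|\varphi|^q\le\kappa^{q-1}|\varphi|$, would lose a factor $\log h$, but the version you actually use is the right one.

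Where you genuinely differ is in how the Gaussian factor in $x$ is obtained. The paper uses that the law of $S_q$ is symmetric and log-concave, hence nonincreasing on $x\ge0$. For $x>2h\sqrt q$ this gives $\lfloor x/2\rfloor\,\Prob(S_q=x)\le\Prob(S_q\ge x/2)$, which turns Hoeffding's tail bound into a point bound via $1/x\le C/(h\sqrt q)$. The range $|x|\le 2h\sqrt q$ is covered by the sup bound alone.

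You instead split $S_q=A+B$ into independent halves and use $\Prob(S_q=x)\le\Prob(|A|\ge|x|/2)\sup_y\Prob(B=y)+\Prob(|B|\ge|x|/2)\sup_y\Prob(A=y)$. This needs no information about the shape of the distribution. You avoid unimodality, the fact that discrete log-concavity is preserved under convolution, and any case split in $x$. The argument would work unchanged for any independent bounded summands once a uniform local bound is known. The cost is only constant factors and a separate treatment of $q=1$, which you supply.

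Your constants check out. Since $\lfloor q/2\rfloor,\lceil q/2\rceil\le q$, both Hoeffding factors are at most $2e^{-x^2/(8qh^2)}$. Since $\lfloor q/2\rfloor\ge q/4$ for $q\ge2$, both sup factors are $O(1/(h\sqrt q))$.

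The one step you assert without proof is the uniform gap $|\varphi(\theta)|\le\kappa<1$ on $1/h\le|\theta|\le\pi$; the paper only sketches it too. Add a line: beyond $|\theta|\ge A/h$ use $|\varphi|\le\pi/((2h+1)|\theta|)$. On the remaining range $1\le h|\theta|\le A$, compare $\varphi(y/h)$ with $\sin y/y$ for large $h$ and check the finitely many small $h$ directly.
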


\begin{proof}
Write
\[
        p_q(x)=\Prob(S_q=x).
\]
The distribution of each $U_j$ is symmetric and log-concave on $\Z$. Since
convolution preserves both properties, the distribution of $S_q$ is also
symmetric and log-concave, and hence unimodal. In particular,
$p_q(x)$ is nonincreasing for $x\ge0$.

We first prove the uniform point-mass estimate
\begin{equation}\label{eq:max-uniform-sum}
        \max_{x\in\Z}p_q(x)\le \frac{C}{h\sqrt q}.
\end{equation}
For $q=1$, this follows immediately from
$p_1(x)\le(2h+1)^{-1}$. Suppose that $q\ge2$. The characteristic function
of one summand is
\[
        \phi_h(\theta)
        =\E \left[e^{i\theta U_1}\right]
        =\frac{1}{2h+1}\sum_{u=-h}^h e^{iu\theta}
        =\frac{\sin((2h+1)\theta/2)}
               {(2h+1)\sin(\theta/2)}.
\]
By independence, the characteristic function of $S_q$ is
$\phi_h(\theta)^q$. Fourier inversion therefore gives
\[
        p_q(x)
        =\frac{1}{2\pi}
          \int_{-\pi}^{\pi}
          e^{-ix\theta}\phi_h(\theta)^q\,d\theta,
\]
and hence
\[
        p_q(x)
        \le \frac{1}{2\pi}
          \int_{-\pi}^{\pi}|\phi_h(\theta)|^q\,d\theta.
\]

We estimate this integral separately near and away from the origin. For
$|\theta|\le c_0/h$, the usual Taylor estimate for the sine quotient gives
\[
        |\phi_h(\theta)|
        \le \exp(-c h^2\theta^2).
\]
Consequently,
\[
        \int_{|\theta|\le c_0/h}
        |\phi_h(\theta)|^q\,d\theta
        \le
        \int_{\mathbb R}
        \exp(-cqh^2\theta^2)\,d\theta
        \le \frac{C}{h\sqrt q}.
\]

On the complementary range \(c_0/h\le|\theta|\le\pi\), the explicit formula
for \(\phi_h\) gives
\[
        |\phi_h(\theta)|\le \eta<1
        \qquad\text{and}\qquad
        |\phi_h(\theta)|\le \frac{C}{h|\theta|}
\]
for absolute constants \(c_0>0\) and \(\eta<1\). The uniform gap from \(1\) can be seen by splitting at \(A/h\): after the rescaling \(y=h\theta\), the range \(c_0/h\le|\theta|\le A/h\) is compact and excludes \(0\), while on \(|\theta|\ge A/h\) the second bound is \(<1\) when \(A\) is large. Since \(q\ge2\),
\[
\begin{aligned}
        \int_{c_0/h\le|\theta|\le\pi}
        |\phi_h(\theta)|^q\,d\theta
        &\le
        C\eta^{q-2}
        \int_{c_0/h}^{\pi}
        \frac{d\theta}{h^2\theta^2}  \\
        &\le \frac{C\eta^{q-2}}{h}
        \le \frac{C}{h\sqrt q}.
\end{aligned}
\]
where the last inequality uses that exponential decay in \(q\) dominates \(q^{-1/2}\). Together these estimates prove \eqref{eq:max-uniform-sum}.

We now add the Gaussian decay in $x$. Since the variables are independent,
have mean zero, and satisfy $|U_j|\le h$, Hoeffding's inequality \cite{Hoeffding1963} gives
\begin{equation}\label{eq:uniform-hoeffding}
        \Prob(|S_q|\ge y)
        \le
        2\exp\left(-c\cdot\frac{y^2}{h^2q}\right)
\end{equation}
for every $y\ge0$.

First suppose that $|x|\le2h\sqrt q$. In this range,
\[
        \exp\left(-c\cdot\frac{x^2}{h^2q}\right)
        \ge e^{-4c},
\]
so the desired estimate follows directly from
\eqref{eq:max-uniform-sum}, after increasing the constant $C$.

It remains to consider the tail. By symmetry, it is enough to assume that
$x>2h\sqrt q$. Since $p_q$ is nonincreasing on the nonnegative integers,
\[
        p_q(y)\ge p_q(x)
\]
for every integer $y$ with $\lceil x/2\rceil\le y\le x$. Summing over these
values gives
\[
        \floor{x/2}\,p_q(x)
        \le \Prob(S_q\ge x/2).
\]
Applying \eqref{eq:uniform-hoeffding}, and absorbing the factor arising
from $(x/2)^2$ into the constant $c$, yields
\[
        p_q(x)
        \le
        \frac{C}{x}\cdot
        \exp\left(-c\cdot\frac{x^2}{h^2q}\right).
\]
Because $x>2h\sqrt q$, we have $x^{-1}\le C/(h\sqrt q)$, and therefore
\[
        p_q(x)
        \le
        \frac{C}{h\sqrt q}\cdot
        \exp\left(-c\cdot\frac{x^2}{h^2q}\right).
\]
The case $x<0$ follows by symmetry, completing the proof.
\end{proof}

We also need a point-probability bound for a weighted difference of two such sums.

\begin{lemma}[Weighted point-mass bound]\label{lem:weighted-uniform-sums}
Let $h,q,r,a,b\ge1$. Let $U_1,\ldots,U_q,V_1,\ldots,V_r$ be independent and uniform on $\{-h,\ldots,h\}$, and put
\[
        S_q=\sum_{j=1}^qU_j,
        \qquad
        T_r=\sum_{j=1}^rV_j.
\]
If $D=\gcd(a,b)$, then
\begin{equation}\label{eq:weighted-uniform-sums}
 \sup_{z\in\Z}\Prob(bS_q-aT_r=z)
 \le C\left(
        \frac{1}{h^2\sqrt{qr}}
        +\frac{D}{h\sqrt{a^2r+b^2q}}
 \right)
\end{equation}
for an absolute constant $C$.
\end{lemma}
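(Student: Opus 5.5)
The plan is to reduce to coprime weights and then bound the probability by conditioning on one of the two sums. The only inputs from \cref{lem:local-gaussian} are the uniform bound \eqref{eq:max-uniform-sum} and the unimodality of the laws of \(S_q\) and \(T_r\), which that proof establishes (symmetric and log-concave).

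\emph{Reduction.} Write \(a=Da'\) and \(b=Db'\) with \(\gcd(a',b')=1\). The event \(bS_q-aT_r=z\) is empty unless \(D\mid z\), and otherwise equals \(b'S_q-a'T_r=w\) with \(w=z/D\). Also
\[
\frac{D}{h\sqrt{a^2r+b^2q}}=\frac{1}{h\sqrt{a'^2r+b'^2q}},
\]
so it suffices to prove the statement when \(D=1\).

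\emph{Conditioning on \(S_q\).} Write
\[
\Prob(b'S_q-a'T_r=w)=\sum_s \Prob(S_q=s)\,\Prob\bigl(a'T_r=b's-w\bigr).
\]
Only values of \(s\) with \(b's\equiv w \pmod{a'}\) contribute. Since \(\gcd(a',b')=1\), these \(s\) form a single residue class \(s\equiv s_0 \pmod{a'}\). Each factor \(\Prob(T_r=\cdot)\) is at most \(C/(h\sqrt r)\) by \eqref{eq:max-uniform-sum}. Hence
\[
\Prob(b'S_q-a'T_r=w)\le \frac{C}{h\sqrt r}\,\Prob(S_q\equiv s_0 \pmod{a'}).
\]

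\emph{Key sublemma.} If \(f\) is a unimodal probability mass function on \(\Z\) with maximum \(M\), then every arithmetic progression of step \(m\) carries \(f\)-mass at most \(M+1/m\). To see this, split the progression at the mode. On the increasing side, each term \(f(c+km)\) is at most the average of \(f\) over the next \(m\) integers, which stay on the increasing side except possibly the last block. Symmetrically, on the decreasing side each term is at most the average over the previous \(m\) integers. The blocks are disjoint, so the total is at most \(1/m\) plus at most one exceptional term near the mode, bounded by \(M\). Applied to \(S_q\), whose maximal mass is at most \(C/(h\sqrt q)\) by \eqref{eq:max-uniform-sum}, this gives
\[
\Prob(S_q\equiv s_0\pmod{a'})\le \frac{C}{h\sqrt q}+\frac{1}{a'},
\]
and therefore
\[
\Prob(b'S_q-a'T_r=w)\le \frac{C}{h^2\sqrt{qr}}+\frac{C}{a'h\sqrt r}.
\]

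\emph{Combining.} Conditioning instead on \(T_r\) gives the symmetric bound with \(a'\sqrt r\) replaced by \(b'\sqrt q\). Take the minimum of the two bounds, and use \(\min(1/x,1/y)\le\sqrt2/\sqrt{x^2+y^2}\) with \(x=a'h\sqrt r\) and \(y=b'h\sqrt q\). The result is
\[
\Prob(b'S_q-a'T_r=w)\le C\left(\frac{1}{h^2\sqrt{qr}}+\frac{1}{h\sqrt{a'^2r+b'^2q}}\right),
\]
which is \eqref{eq:weighted-uniform-sums} after undoing the reduction.

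The main obstacle is the arithmetic-progression sublemma. It must be stated carefully so that the boundary block at the mode costs only one extra term of size \(M\). The unimodality it relies on is already available from the proof of \cref{lem:local-gaussian}. A Fourier approach is the fallback, but it would need to handle aliasing near \(\theta\in 2\pi\Z/a'\), so I would try the conditioning argument first.
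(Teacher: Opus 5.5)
Your proof is correct, but it takes a genuinely different route from the paper's.

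\textbf{The paper's route.} It uses the full Gaussian-decay form of \cref{lem:local-gaussian} for both $S_q$ and $T_r$. It parametrizes the solutions of $b_0x-a_0y=z/D$ as $(x_0+a_0k,\,y_0+b_0k)$ and multiplies the two Gaussian point bounds. After completing the square in $k$, it bounds the resulting discrete Gaussian sum by $C(1+A^{-1/2})$ with $A=(a_0^2r+b_0^2q)/(h^2qr)$. The combined denominator $\sqrt{a_0^2r+b_0^2q}$ thus falls out of a single symmetric computation.

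\textbf{Your route.} You use only the sup bound \eqref{eq:max-uniform-sum} and unimodality. Conditioning on one sum reduces the problem to the mass of the other sum on a single residue class, which your arithmetic-progression lemma controls. You then merge the two one-sided bounds via $\min(1/x,1/y)\le\sqrt2/\sqrt{x^2+y^2}$.

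\textbf{What each buys.} Your approach makes the Hoeffding/tail half of \cref{lem:local-gaussian} unnecessary, since this lemma is its only consumer. It also applies verbatim to any independent integer sums with unimodal laws and a point-mass bound. The paper's approach avoids the two-sided case split and the auxiliary progression lemma, at the cost of needing Gaussian tails.

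\textbf{One bookkeeping point in your sublemma.} As you sketch it, the blocks attached to the two progression points $x_0\le\mu<x_1$ flanking the mode overlap, and neither term need be dominated by its block average. There are two easy fixes. You can discard both terms, giving $2M+1/m$, which is all you need up to constants. Alternatively, assign the block $[x_0+1,x_1]$ or $[x_0,x_1-1]$ (according as $f(x_1)\le f(x_0)$ or not) to the smaller of $f(x_0),f(x_1)$; unimodality makes that value at most the block average. Tiling the rest of $\Z$ by the remaining gaps then recovers $M+1/m$.
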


\begin{proof}
Write $a=Da_0$ and $b=Db_0$, where $\gcd(a_0,b_0)=1$. If $D\nmid z$, the probability is zero. Otherwise, the integer solutions of
\[
        b_0x-a_0y=z/D
\]
are
\[
        x=x_0+a_0k,
        \qquad
        y=y_0+b_0k,
        \qquad k\in\Z,
\]
for one fixed solution $(x_0,y_0)$. By \cref{lem:local-gaussian},
\begin{align*}
 \Prob(bS_q-aT_r=z)
 &\le \frac{C}{h^2\sqrt{qr}}
 \sum_{k\in\Z}
 \exp\left[-c\left(
   \frac{(x_0+a_0k)^2}{h^2q}
   +\frac{(y_0+b_0k)^2}{h^2r}
 \right)\right].
\end{align*}
After completing the square, the quadratic expression in $k$ has leading coefficient
\[
        A=\frac{a_0^2r+b_0^2q}{h^2qr}.
\]
The Gaussian sum bound
\[
        \sup_{\theta\in\mathbb R}
        \sum_{k\in\Z}e^{-cA(k-\theta)^2}
        \le C(1+A^{-1/2}),
\]
which follows by comparing the sum with \(1+\int_{\mathbb R}e^{-cAx^2}\,dx\), therefore gives
\[
 \Prob(bS_q-aT_r=z)
 \le C\left(
        \frac{1}{h^2\sqrt{qr}}
        +\frac{1}{h\sqrt{a_0^2r+b_0^2q}}
 \right),
\]
which is \eqref{eq:weighted-uniform-sums}.
\end{proof}

\section{Amplification by Randomly Splitting One Letter}\label{sec:amplification}

We state the construction for a general number \(p\) of independent perturbations.

\begin{proposition}[Amplification]\label{prop:amplification}
Fix an integer $p\ge3$. There are constants $c_p,C_p>0$ with the following property. Let $(X,\sigma,\rho)$ be a marked 3-free word of length $n$ using \(r\) letters. If
\begin{equation}\label{eq:amplifier-size-hypothesis}
        n\ge C_p\rho^{-2},
\end{equation}
then there is a marked 3-free word \((Y,\tau,c_p\rho)\) using at most \(r+p+1\) letters such that
\begin{equation}\label{eq:amplifier-conclusion}
        c_p\rho^p n^p\le |Y|\le n^p.
\end{equation}
\end{proposition}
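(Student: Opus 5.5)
We follow the construction sketched in the introduction. Put \(M=n+1\) and \(W_0=(X\#)^N\) with \(N\approx n^{p-1}\), chosen so that the final length is at most \(n^p\). By \cref{lem:periodic-reduction}, the only bad pairs of \(W_0\) are adjacent intervals of lengths \(aM\) and \(bM\) with \(a,b\ge1\). Every such interval has the same count vector \(V\).

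Next we destroy these pairs by recolouring \(\sigma\). Let \(m=|X|_\sigma\ge\rho n\). Group the \(Nm\) occurrences of \(\sigma\) into consecutive blocks, for instance one block per period, of size \(m\). Inside each block we replace the \(\sigma\)'s by new letters \(\tau_0,\tau_1,\ldots,\tau_p\). For each block \(\ell\) and each \(1\le i\le p\) we use \(m/(p+1)+U_{\ell,i}\) copies of \(\tau_i\), where the \(U_{\ell,i}\) are independent and uniform on \(\{-h,\ldots,h\}\) with \(h=\lfloor m/(4p)\rfloor\). The letter \(\tau_0\) takes the remaining positions, so its count is \(m/(p+1)-\sum_i U_{\ell,i}\ge m/(p+1)-ph\), which is positive. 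The hypothesis \(n\ge C_p\rho^{-2}\) makes \(h\ge1\) and handles rounding. Within a block the letters are placed in a fixed order, for example all \(\tau_1\)'s first. The new marked letter is \(\tau=\tau_0\), and its density is at least \(m/(2(p+1)n)\ge c_p\rho\). The alphabet grows by \(p+1\).

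The first step is to show that no new bad pairs appear. Merging \(\tau_0,\ldots,\tau_p\) back into \(\sigma\) is a linear projection of the path. If \(\avg(I)=\avg(J)\) in \(Y\), the same equality holds in \(W_0\). Hence every bad pair of \(Y\) has lengths \(aM\) and \(bM\). Each such pair is determined by its starting position \(u\) and by \(a,b\).

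The second step bounds the probability that a fixed pair stays bad. An interval of length \(aM\) meets about \(a\) blocks, and only boundedly many of them partially. Its \(\tau_i\)-count equals a deterministic quantity plus a sum \(S^{(i)}\) of about \(a\) of the variables \(U_{\cdot,i}\). The boundary blocks contribute partial but still random amounts, and we condition on them. Badness requires \(b\,c_i(I)=a\,c_i(J)\) for each \(i=1,\ldots,p\). The two intervals use disjoint blocks, apart from the shared boundary block, on which we condition. The \(p\) equations involve independent families of variables. So \cref{lem:weighted-uniform-sums}, applied with \(q\approx a\) and \(r\approx b\), gives probability at most
\[
C^p\Big(\frac{1}{h^2\sqrt{ab}}+\frac{D}{h\sqrt{a^2b+b^2a}}\Big)^p .
\]

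The third step is the union bound. There are at most \(NM\) starting positions and at most \(N\) choices for each of \(a\) and \(b\). We sum over them and split according to \(D=\gcd(a,b)\). The main obstacle is to make the expected count \(<1\) using only \(p\ge3\). Take the dominant term \(D/(h\sqrt{ab(a+b)})\). Write \(a=Da',\ b=Db'\), so the term becomes \(1/(h\sqrt{D}\,\sqrt{a'b'(a'+b')})\). Its \(p\)-th power summed over \(a',b'\) converges once \(3p/2>2\). Summing over \(D\) gives \(\sum_D D^{-p/2}\), which converges for \(p\ge3\). This yields a total of about \(NM\cdot h^{-p}\).

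This bound is too large. The fix is to choose \(N\) with \(NM\approx c\,h^p\approx c(\rho n)^p\). That gives \(|Y|=NM\ge c_p\rho^pn^p\), while still \(|Y|\le n^p\). With this choice the expected number of bad pairs is below \(1\), and we fix an outcome with none.

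The main care point is the boundary blocks. Conditioning on them leaves at least \(a-2\) and \(b-2\) free variables, so small \(a,b\) need separate handling. For such pairs we use a single free variable and the bound \(1/h\) per equation, giving \(h^{-p}\) per pair. This is summable in the same way.
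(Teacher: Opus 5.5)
Your overall route is the paper's: the separator with \cref{lem:periodic-reduction}, splitting \(\sigma\) into \(p+1\) letters with \(p\) independent uniform perturbations per block, one application of \cref{lem:weighted-uniform-sums} per coordinate, the gcd decomposition, and total length of order \(h^p\). The gap comes from your choice of one block per period.

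An interval of length \(aM\) contains \(am\) consecutive occurrences of \(\sigma\). Unless its start is aligned with a block, it contains only \(a-1\) complete blocks. So for \((a,b)=(1,1)\) at any unaligned start, neither interval contains a complete block, and after conditioning on the boundary blocks nothing random remains. Your patch, ``use a single free variable and the bound \(1/h\) per equation'', is not justified. The number of \(\tau_i\)'s in a prefix or suffix of a block is a clamped (min/max) function of that block's variables, not an affine one, and it can be insensitive to them.

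Concretely, place the \(\tau_1\)'s first and write \(c_1(B)\) for the number of \(\tau_1\)'s in \(B\). Let \(I\) consist of the last \(m-k\) \(\sigma\)-positions of \(B_1\) and the first \(k\) of \(B_2\). Let \(J\) consist of the last \(m-k\) of \(B_2\) and the first \(k\) of \(B_3\), with \(k=m/(p+1)\). Condition on the shared block \(B_2\) with \(c_1(B_2)=k\). The \(\tau_1\)-equation becomes \((c_1(B_1)-k)_++k=\min\{k,c_1(B_3)\}\). This holds exactly when \(c_1(B_1)\le k\le c_1(B_3)\), i.e. with probability about \(1/4\), not \(O(1/h)\). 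The randomness that kills this pair sits in the shared block, which you condition away.

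The same patch is also too weak when only one of \(a,b\) is small. A flat \(h^{-p}\) per pair is not summable over \(b\le N\). There you need \cref{lem:local-gaussian} on the roughly \(b\) complete blocks of the long interval, which gives \((h\sqrt b)^{-p}\).

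The paper avoids this by taking blocks of \(g=\floor{s/K}\) occurrences with \(K\ge4\). Then every interval of length \(aM\) contains at least \((K-2)a\ge 2a\) complete blocks. Each of the \(p\) equations is exactly linear in free, independent variables with \(q_I,q_J\ge1\), and \cref{lem:weighted-uniform-sums} applies uniformly in \((a,b)\).

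If you prefer to keep one block per period, you can repair \((a,b)=(1,1)\) by conditioning on \(B_1,B_3\) instead. On the event that the cut falls in a given run of \(B_2\), the prefix-minus-suffix count vector of \(B_2\) is an affine function of its letter counts. Its linear part is triangular with respect to the run order with diagonal entries \(\pm1\), hence injective. Summing over the \(p+1\) possible runs gives \(O_p(h^{-p})\).

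Separately, \(h=\floor{m/(4p)}\) is too large. The \(\tau_0\)-count \(m/(p+1)-\sum_iU_{\ell,i}\) can be as small as about \(m/(p+1)-m/4\), which is negative for \(p\ge4\). So both the block composition and your density bound for \(\tau_0\) fail. Take \(h\) a small multiple of \(m/p^2\); the paper uses \(h=\floor{u/(10p)}\).
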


\begin{proof}
During the proof, \(c_p\) may be decreased and \(C_p\) increased; only finitely many such changes are made, so their final values are fixed. We may also assume \(0<c_p\le1\). Put
\[
        s=|X|_\sigma\ge\rho n,
        \qquad
        M=n+1,
\]
and form \(W=(X\#)^\infty\), where \(\#\) is a new letter. The size hypothesis implies
\[
        s\ge C_p\rho^{-1}
        \quad\text{and}\quad
        \frac{s^p}{M}
        \ge \frac{\rho^pn^{p-1}}{2}
        \ge \frac{C_p^{p-1}}{2}\cdot\rho^{2-p}
        \ge \frac{C_p^{p-1}}{2}.
\]
Thus, by increasing \(C_p\), we may assume that every later lower bound depending only on \(p\) is satisfied.

\emph{Random split.}
Choose a fixed integer \(K=K(p)\ge4\). Number the occurrences of \(\sigma\) in \(W\) as \(1,2,\ldots\), and let each \(\sigma\)-block consist of
\[
        g=\floor{s/K}
\]
consecutive occurrences in this ordering. Put
\[
        u=\floor{g/(p+1)},
        \qquad
        h=\floor{u/(10p)}.
\]
For each \(\sigma\)-block \(B\), independently choose
\[
        U_{B,1},\ldots,U_{B,p}
\]
uniformly from \(\{-h,\ldots,h\}\). Replace the \(g\) occurrences of \(\sigma\) in \(B\) by new letters \(\sigma_1,\ldots,\sigma_{p+1}\) with respective counts
\begin{equation}\label{eq:block-composition}
        u+U_{B,1},\ldots,u+U_{B,p},
        \qquad
        g-pu-\sum_{i=1}^pU_{B,i}.
\end{equation}
Assign the first prescribed number of \(\sigma\)-positions in \(B\) to \(\sigma_1\), the next prescribed number to \(\sigma_2\), and so on.

Write \(g=(p+1)u+r_0\), where \(0\le r_0\le p\). The first \(p\) counts in \eqref{eq:block-composition} are at least \(u-h\), and the last is at least \(u-ph\). For sufficiently large \(u\), all are therefore at least \(\delta_pg\) for some \(\delta_p>0\). Moreover,
\[
        g\ge \frac{s}{2K},\qquad
        u\ge \frac{g}{2(p+1)},\qquad
        h\ge \frac{u}{20p},
\]
so
\begin{equation}\label{eq:h-comparable-s}
        h\ge c_ps.
\end{equation}

Choose \(0<\varepsilon_p\le1\) sufficiently small and set
\begin{equation}\label{eq:amplifier-length}
        N=M\floor{\frac{\varepsilon_ps^p}{M}}.
\end{equation}
By the preceding lower bound on \(s^p/M\), we may assume that the quantity inside the floor is at least \(2\). Since \(\lfloor x\rfloor\ge x/2\) for \(x\ge2\),
\begin{equation}\label{eq:N-bounds}
        \frac{\varepsilon_p}{2}\cdot s^p\le N\le \varepsilon_ps^p\le n^p.
\end{equation}
Let \(Y\) be the first \(N\) letters of the split word.

\emph{Possible bad pairs.}
Any bad pair in \(Y\) projects, after merging \(\sigma_1,\ldots,\sigma_{p+1}\) back to \(\sigma\), to a bad pair in \(W\). By \cref{lem:periodic-reduction}, its two lengths are \(aM\) and \(bM\) for positive integers \(a,b\). For each fixed pair \((a,b)\), there are at most \(N\) possible starting positions.

Fix one possible bad pair, with adjacent intervals \(I\) and \(J\) of lengths \(aM\) and \(bM\). Each interval of length \(aM\) in \(W\) contains exactly \(as\) occurrences of \(\sigma\), regardless of its starting position. Let \(q_I\) and \(q_J\) be the numbers of complete \(\sigma\)-blocks contained in \(I\) and \(J\), respectively. At most two \(\sigma\)-blocks meet an interval without being contained in it. Since \(g\le s/K\),
\begin{equation}\label{eq:complete-block-counts}
        q_I\ge \frac{as}{g}-2\ge (K-2)a,
        \qquad
        q_J\ge \frac{bs}{g}-2\ge (K-2)b.
\end{equation}

Condition on every random variable except those attached to complete \(\sigma\)-blocks contained in \(I\) or \(J\). If the pair remains bad after the split, equality of the normalized counts of \(\sigma_i\), for each \(1\le i\le p\), imposes an equation
\begin{equation}\label{eq:coordinate-collision}
        b\sum_{B\subset I}U_{B,i}
        -a\sum_{B\subset J}U_{B,i}
        =z_i,
\end{equation}
where \(z_i\) is determined by the conditioned variables, the baseline counts, and the partially intersected blocks. The \(p\) equations use independent families of random variables. The last replacement letter gives no further condition, because its count is determined by the first \(p\) counts and the total number of replaced occurrences.

Let \(D=\gcd(a,b)\). From \eqref{eq:h-comparable-s} and \eqref{eq:complete-block-counts},
\[
 h^2\sqrt{q_Iq_J}\ge c_ps^2\sqrt{ab},
 \qquad
 a^2q_J+b^2q_I\ge c_pab(a+b).
\]
Applying \cref{lem:weighted-uniform-sums} therefore shows that the conditional probability of one equation in \eqref{eq:coordinate-collision} is at most
\begin{equation}\label{eq:one-coordinate-survival}
        C_p\left(
        \frac{1}{s^2\sqrt{ab}}
        +\frac{D}{s\sqrt{ab(a+b)}}
        \right).
\end{equation}
Set
\[
 A=\frac{1}{s^2\sqrt{ab}},
 \qquad
 B=\frac{D}{s\sqrt{ab(a+b)}}.
\]
Conditional independence across the \(p\) coordinates gives a bound \(C_p(A+B)^p\). Since
\[
        (A+B)^p\le 2^{p-1}(A^p+B^p),
\]
and \(p\) is fixed, the factor \(2^{p-1}\) may be absorbed into \(C_p\). Hence
\begin{equation}\label{eq:candidate-survival}
 \Prob(\text{the pair remains bad})
 \le C_p\left(
        \frac{1}{s^{2p}(ab)^{p/2}}
        +\frac{D^p}{s^p[ab(a+b)]^{p/2}}
 \right).
\end{equation}

\emph{Expected number of bad pairs.}
The two sums
\begin{equation}\label{eq:summable-series}
 \sum_{a,b\ge1}\frac{1}{(ab)^{p/2}}
 \quad\text{and}\quad
 \sum_{a,b\ge1}
 \frac{\gcd(a,b)^p}{[ab(a+b)]^{p/2}}
\end{equation}
are finite for $p\ge3$. For the second, write $a=Du$ and $b=Dv$, where $D=\gcd(a,b)$, and then discard the coprimality condition:
\[
 \sum_{a,b\ge1}
 \frac{\gcd(a,b)^p}{[ab(a+b)]^{p/2}}
 \le
 \sum_{D\ge1}D^{-p/2}
 \sum_{u,v\ge1}[uv(u+v)]^{-p/2}<\infty.
\]
Indeed, $u+v\ge2\sqrt{uv}$, so the inner summand is at most a constant times $(uv)^{-3p/4}$.

Let \(Z\) be the number of bad pairs in \(Y\). Summing \eqref{eq:candidate-survival} over at most \(N\) starting positions for each \(a,b\), and using the convergence of the two displayed series, gives
\begin{equation}\label{eq:expected-bad-pairs}
        \E \left[Z\right]
        \le C_pN(s^{-p}+s^{-2p})
        \le C_p\varepsilon_p(1+s^{-p}).
\end{equation}
Choose \(\varepsilon_p\) so that the last quantity is less than \(1\). Since \(Z\) is a nonnegative integer, some outcome has \(Z=0\), and the corresponding word \(Y\) is 3-free.

\emph{Retaining a frequent letter.}
Since $M\mid N$, write $N=\ell M$. Before splitting, this prefix contains exactly \(\ell s=Ns/M\) occurrences of \(\sigma\). At most two \(\sigma\)-blocks meeting the prefix are incomplete, and every complete block contains at least $\delta_pg$ occurrences of $\sigma_1$. Therefore
\begin{equation}\label{eq:output-density}
        |Y|_{\sigma_1}
        \ge \delta_p(\ell s-2g).
\end{equation}
The floor in \eqref{eq:amplifier-length} is at least \(2\), so \(\ell\ge2\). Since \(K\ge4\) and \(g\le s/K\), this implies \(\ell s\ge4g\). Also \(M\le2n\) and \(s\ge\rho n\), so \eqref{eq:output-density} gives
\[
        |Y|_{\sigma_1}
        \ge c_p\cdot\frac{Ns}{M}
        \ge c_p\rho N.
\]
Thus we may take $\tau=\sigma_1$.

Finally, replacing \(\sigma\) by \(p+1\) letters increases the alphabet size by \(p\), and the separator adds one more letter. Thus the net increase is \(p+1\). The lower bound in \eqref{eq:amplifier-conclusion} follows from \eqref{eq:N-bounds} and \(s\ge\rho n\), after decreasing the fixed \(c_p\) if necessary.
\end{proof}

\section{Iteration}\label{sec:iteration}

The amplification proposition immediately yields a family of double-exponential bounds.

\begin{proposition}\label{prop:general-rate}
For every integer $p\ge3$,
\begin{equation}\label{eq:general-rate}
        \log_2\log_2 L(d)
        \ge \frac{\log_2p}{p+1}\cdot d-O_p(1)
\end{equation}
for all sufficiently large \(d\), where the implicit constant in \(O_p(1)\) may depend on \(p\).
\end{proposition}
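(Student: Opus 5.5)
We will iterate \cref{prop:amplification} starting from a seed supplied by \cref{prop:seed}, keeping track of three quantities along the way: the length \(n_k\), the density \(\rho_k\), and the alphabet size \(r_k\).

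\textbf{Seed.} Fix \(p\ge3\) and let \(c_p,C_p\) be the constants of \cref{prop:amplification}. Take a seed \((X_0,\sigma_0,1/2)\) with \(|X_0|=n_0\ge Q\), where \(Q=Q(p)\) is a large constant chosen below. The seed in \eqref{eq:simple-seed} uses \(m+1\) letters with \(m\le Q\), so \(r_0=O_p(1)\). Applying the amplification \(k\) times (as long as its hypothesis holds) gives \(\rho_k=c_p^k/2\), \(r_k\le r_0+k(p+1)\), and
\[
n_{k+1}\ge c_p\rho_k^p n_k^p .
\]

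\textbf{Size hypothesis.} We must check that \(n_k\ge C_p\rho_k^{-2}\) for every \(k\). Take logarithms and put \(\lambda_k=\log_2 n_k\). The recursion becomes
\[
\lambda_{k+1}\ge p\lambda_k - O_p(k).
\]
Since \(p\ge3\) and \(p^k\) grows geometrically while the accumulated error stays small relative to it, we can unroll the recursion:
\[
\lambda_k\ge p^k\Bigl(\lambda_0-\sum_{j\ge0}\frac{O_p(j+1)}{p^{j+1}}\Bigr)\ge p^k\,\frac{\lambda_0}{2}
\]
once \(\lambda_0\) exceeds a suitable constant depending on \(p\). The required bound \(\lambda_k\ge \log_2 C_p+2k\log_2(1/c_p)+2=O_p(k)\) then holds for all \(k\), because \(p^k\lambda_0/2\) dominates any linear function of \(k\) once \(Q\) is large. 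This fixes the choice of \(Q\), and it is the only real bookkeeping step. One might worry about the degenerate case \(c_p\rho_k\le0\), but it cannot occur since \(c_p>0\).

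\textbf{Conclusion.} Given a large \(d\), let
\[
k=\Bigl\lfloor \frac{d-r_0}{p+1}\Bigr\rfloor .
\]
Then \(X_k\) uses at most \(d\) letters, so it gives a 3-free path in \(\Z^d\) (unused coordinates simply stay at zero). Hence
\[
\log_2\log_2 L(d)\ge \log_2\lambda_k\ge k\log_2 p+\log_2(\lambda_0/2)\ge \frac{\log_2p}{p+1}\,d-O_p(1).
\]
Here \(r_0\) and the rounding in the floor contribute only \(O_p(1)\). The main obstacle is the uniform verification of the size hypothesis in the middle step. Its loss of \(O_p(k)\) in the exponent is harmless only because it is absorbed against the geometric growth \(p^k\lambda_0\).
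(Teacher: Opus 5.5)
Your proposal is correct and follows the paper's proof essentially step for step: seed from \cref{prop:seed}, iterate \cref{prop:amplification} with $\rho_k=c_p^k/2$ and $r_k\le r_0+k(p+1)$, unroll the logarithmic recursion $\lambda_{k+1}\ge p\lambda_k-O_p(k+1)$ to get $\lambda_k\ge p^k\lambda_0/2$, and take $k=\lfloor (d-r_0)/(p+1)\rfloor$. The only cosmetic difference is that you check the size hypothesis $n_k\ge C_p\rho_k^{-2}$ directly from the geometric bound (an implicit induction on $k$ that you could state explicitly), whereas the paper runs a separate linear induction $A_t\ge H_p(t+1)$.
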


\begin{proof}
Fix $p\ge3$ and the constants from \cref{prop:amplification}. By \cref{prop:seed}, choose a marked 3-free word \((X_0,\sigma_0,\rho_0)\), with \(\rho_0=1/2\), whose length is a sufficiently large constant depending only on \(p\). Let
\[
        n_t=|X_t|,
        \qquad
        A_t=\log_2 n_t,
\]
and let \(r_t\) be the number of letters used by \(X_t\).

At each stage, apply \cref{prop:amplification} and set \(\rho_{t+1}=c_p\rho_t\). Thus \(\rho_t=\frac12c_p^t\), and
\begin{equation}\label{eq:iteration-recurrence}
        n_{t+1}\ge c_p\rho_t^pn_t^p,
        \qquad
        r_{t+1}\le r_t+p+1.
\end{equation}
Taking logarithms of the first inequality gives, for a constant \(B_p>0\),
\begin{equation}\label{eq:log-recurrence}
        A_{t+1}\ge pA_t-B_p(t+1).
\end{equation}
Also
\[
        \log_2(C_p\rho_t^{-2})=\alpha_pt+\beta_p
\]
for constants \(\alpha_p,\beta_p\). Choose \(H_p\) so that
\[
        H_p\ge\max\{\alpha_p,\beta_p\}
        \quad\text{and}\quad
        (p-2)H_p\ge B_p,
\]
and choose the initial word so that \(A_0\ge H_p\). If \(A_t\ge H_p(t+1)\), then \eqref{eq:log-recurrence} gives
\[
 A_{t+1}\ge [pH_p-B_p](t+1)\ge H_p(t+2).
\]
Thus \(A_t\ge H_p(t+1)\) for every \(t\), which in turn implies \(n_t\ge C_p\rho_t^{-2}\) at every stage.

Iterating \eqref{eq:log-recurrence} more precisely gives
\begin{align*}
        A_t
        &\ge p^t\left(
        A_0-B_p\sum_{j=0}^{t-1}\frac{j+1}{p^{j+1}}
        \right)\\
        &\ge \gamma_pp^t,
\end{align*}
for some \(\gamma_p>0\), after increasing the fixed initial length once more. Also
\[
        r_t\le r_0+(p+1)t.
\]
Given sufficiently large $d$, take
\[
        t=\floor{\frac{d-r_0}{p+1}}.
\]
The word \(X_t\) uses at most \(d\) letters, so after relabeling its alphabet as a subset of \([d]\) we have \(L(d)\ge n_t\). Hence
\[
        \log_2\log_2L(d)
        \ge \log_2A_t
        \ge t\log_2p-O_p(1),
\]
which is \eqref{eq:general-rate}.
\end{proof}

For completeness, let \(f(x)=\log_2x/(x+1)\). Up to the positive factor \(1/\log 2\), the sign of \(f'(x)\) is the sign of
\[
        1+\frac1x-\log x.
\]
This is negative for \(x\ge4\), while \(f(4)>f(3)\). Hence \(p=4\) is the unique maximizing integer \(p\ge3\).

\begin{proof}[Proof of \cref{thm:main}]
Apply \cref{prop:general-rate} with $p=4$. Since $\log_2 4=2$, this gives
\[
        \log_2\log_2 L(d)\ge \frac{2}{5}d-O(1),
\]
which is \eqref{eq:main-loglog}. 
\end{proof}

\section{Finite Planar Walks with a Small Step Set}\label{sec:planar}

For a finite set \(S\subseteq\Z^2\), an \(S\)-walk is a sequence whose successive differences belong to \(S\). The main theorem gives the following quantitative finite consequence.

\begin{theorem}[Finite planar walks with a small step set]\label{thm:swalk-projection}
There is an absolute constant \(C>0\) such that, for every sufficiently large \(n\), there is a finite set
\begin{equation}\label{eq:step-set-box}
        S\subseteq
        \{1\}\times\{1,\ldots,(n+1)^3\}
        \subseteq\Z^2
\end{equation}
with
\begin{equation}\label{eq:step-set-size}
        |S|\le \frac{5}{2}\log_2\log_2 n+C
\end{equation}
and an \(S\)-walk of \(n\) steps whose vertices are distinct and contain no three collinear points.
\end{theorem}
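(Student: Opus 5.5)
Given \(n\), choose the least \(d\) with \(L(d)\ge n\). By \cref{thm:main}, \(\log_2\log_2 L(d)\ge \frac25 d - C_0\), so \(d\le \frac52\log_2\log_2 n + C\). We make this precise as follows. Monotonicity in \(d\) is immediate, since extra letters may go unused. Let \(d_0=\lceil \frac52(\log_2\log_2 n + C_0)\rceil\); then \cref{thm:main} gives \(L(d_0)\ge n\) once \(n\) is large. Take a 3-free word \(X=x_1\cdots x_n\) over \([d]\) with \(d\le d_0\) (truncating a longer one; prefixes of 3-free words are 3-free). Its path \(v_0,\dots,v_n\) in \(\Z^d\) has distinct vertices, since coordinate sums increase, and no collinear triple.

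Next we project linearly to the plane. Let \(\pi:\Z^d\to\Z^2\) be \(\pi(e_i)=(1,\lambda_i)\) with distinct integers \(\lambda_i\in\{1,\dots,(n+1)^3\}\). The projected walk has steps in \(S=\{(1,\lambda_i)\}\), and \(|S|\le d\), which gives \eqref{eq:step-set-size}. Its vertices \(\pi(v_t)=(t,\lambda\cdot v_t)\) have distinct first coordinates and are therefore distinct.

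The substantive step is to choose \(\lambda\) so that no triple becomes collinear. For \(i<j<k\), the points \(\pi(v_i),\pi(v_j),\pi(v_k)\) are collinear iff
\[
(k-j)\,\lambda\cdot(v_j-v_i)=(j-i)\,\lambda\cdot(v_k-v_j),
\]
that is, \(\lambda\cdot w_{ijk}=0\) with
\[
w_{ijk}=(k-j)(v_j-v_i)-(j-i)(v_k-v_j)\in\Z^d.
\]
Since \(X\) is 3-free, \(w_{ijk}\ne0\), because \(w_{ijk}=0\) is exactly equality of the two averages. Each \(\lambda\cdot w=0\) is a nonzero linear form, so by the Schwartz--Zippel bound (or simply fixing all but one coordinate on which \(w\) is nonzero), a uniformly random \(\lambda\in\{1,\dots,R\}^d\) satisfies it with probability at most \(1/R\). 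Distinctness of the \(\lambda_i\) fails with probability at most \(\binom d2/R\). There are fewer than \(n^3/6\) triples, so with \(R=(n+1)^3\) the total failure probability is below
\[
\frac{n^3/6+d^2}{(n+1)^3}<1,
\]
using \(d=O(\log\log n)\). Hence a good \(\lambda\) exists.

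I expect no serious obstacle. The only care needed is in the counting that fits \(R\) into \((n+1)^3\) and in absorbing the rounding of \(d_0\) into \(C\), including the hypothesis "sufficiently large \(d\)" of \cref{thm:main}, which holds since \(d_0\to\infty\). It is also worth stating explicitly that the first coordinate \(1\) of every step is what makes the vertices distinct and reduces collinearity to a single scalar condition.
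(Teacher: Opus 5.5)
Your proof is correct and follows the paper's argument essentially step for step: you take \(d=\lceil\frac52(\log_2\log_2 n+C_0)\rceil\) via \cref{thm:main}, use random steps \((1,w_i)\) with \(w_i\) uniform in \(\{1,\ldots,(n+1)^3\}\), and take a union bound over the \(\binom{n+1}{3}\) nonzero obstruction vectors \(|J|\mathbf c(I)-|I|\mathbf c(J)\). Your extra requirement that the \(\lambda_i\) be distinct is unnecessary, since \(S\) is a set and \(|S|\le d\) holds regardless, but it is harmless in the union bound.
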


\begin{proof}
Let
\[
        d=\left\lceil \frac52\bigl(\log_2\log_2 n+C_0\bigr)\right\rceil.
\]
For sufficiently large \(n\), \cref{thm:main} applies and gives \(L(d)\ge n\). Taking a prefix of length \(n\), we obtain a 3-free word \(x_1\cdots x_n\) over \([d]\). Moreover,
\begin{equation}\label{eq:projection-dimension}
        d\le \frac{5}{2}\log_2\log_2 n+C
\end{equation}
for an absolute constant \(C\).

For adjacent nonempty intervals \(I,J\), let \(\mathbf c(I),\mathbf c(J)\in\Z^d\) be their letter-count vectors and define
\begin{equation}\label{eq:projection-obstruction}
        R(I,J)=|J|\mathbf c(I)-|I|\mathbf c(J).
\end{equation}
The word is 3-free, so \(R(I,J)\ne0\). There are exactly \(\binom{n+1}{3}\) adjacent pairs, and this number is smaller than \(K=(n+1)^3\).

Choose \(w_1,\ldots,w_d\) independently and uniformly from \(\{1,\ldots,K\}\). For a fixed nonzero vector \(R\in\Z^d\), choose a coordinate \(k\) with \(R_k\ne0\) and condition on all \(w_i\) with \(i\ne k\). At most one value of \(w_k\) satisfies \(R\cdot w=0\). Hence
\[
        \Prob(R\cdot w=0)\le\frac1K.
\]
A union bound over the \(\binom{n+1}{3}\) vectors in \eqref{eq:projection-obstruction} shows that some \(w\in\{1,\ldots,K\}^d\) satisfies
\begin{equation}\label{eq:generic-projection}
        R(I,J)\cdot w\ne0
\end{equation}
for every adjacent pair \(I,J\).

Assign to letter \(i\) the step
\[
        s_i=(1,w_i)\in\Z^2
\]
and put
\[
        Q_t=\sum_{m=1}^t s_{x_m}.
\]
The first coordinate of \(Q_t\) is \(t\), so the vertices are distinct. The two displacement vectors associated with adjacent intervals \(I,J\) are
\[
        (|I|,\mathbf c(I)\cdot w)
        \quad\text{and}\quad
        (|J|,\mathbf c(J)\cdot w).
\]
Their determinant is
\[
        |I|\mathbf c(J)\cdot w-|J|\mathbf c(I)\cdot w
        =-R(I,J)\cdot w,
\]
which is nonzero by \eqref{eq:generic-projection}. Thus the two displacements are not parallel, so no three vertices are collinear. Taking \(S=\{s_1,\ldots,s_d\}\), for which \(|S|\le d\), proves \eqref{eq:step-set-box} and \eqref{eq:step-set-size}.
\end{proof}

\section*{Acknowledgments}

GPT-5.6 Pro was used to assist with optimizing the parameters in the proof. The central amplification construction and the idea of introducing randomness were developed by the author.

\end{document}